\newtheorem{theorem}{Theorem}[section]
\newtheorem{lemma}[theorem]{Lemma}
\newtheorem{corollary}[theorem]{Corollary}
\newtheorem{definition}[theorem]{Definition}
\theoremstyle{remark}
\newtheorem{remark}[theorem]{\bf Remark}
\renewcommand{\geq}{\geqslant}
\newcommand{\ptl}{\partial}
\newcommand{\Om}{\Omega}
\newcommand{\rr}{\mathbb{R}}
\newcommand{\sph}{\mathbb{S}}
\newcommand{\nn}{\mathbb{N}}
\numberwithin{equation}{section}
\begin{document}

\title[Cheeger sets for rotationally symmetric planar convex bodies]
{Cheeger sets for rotationally symmetric planar convex bodies}

\author[A. Ca\~nete]{Antonio Ca\~nete}
\address{Departamento de Matem\'atica Aplicada I \\ Instituto de Matemáticas IMUS \\ Universidad de Sevilla}
\email{antonioc@us.es}


\subjclass[2010]{Primary 52A10, 28A75, 49Q20, 52A40} 
\keywords{Cheeger set, rotationally symmetry}


\begin{abstract}
In this note we obtain some properties of the Cheeger set $C_\Om$ asociated to a $k$-rotationally symmetric planar convex body $\Om$.
More precisely, we prove that $C_\Om$ is also $k$-rotationally symmetric and touches all the \emph{edges} of $\Om$.
\end{abstract}

\maketitle

\section{Introduction}

The \emph{Cheeger problem} is a classical problem in Geometry widely studied in literature,
with connections in many different fields.
The interested reader may find in~\cite{SS} some historical remarks on closely related questions
(considered, among others, by J.~Steiner or A.~S.~Besicovitch), see also~\cite[Problem~A23]{cfg}.
The origin of this problem is usually established in a paper by J. Cheeger~\cite{cheeger}, who proved in 1969 the following
inequality for any bounded domain $\Om$ in $\rr^n$ (in fact, this result was stated for any compact Riemannian manifold without boundary):
\begin{equation}
\label{eq:original}
\lambda_1(\Om)\geq\frac{h(\Om)^2}{4},
\end{equation}
where $\lambda_1(\Om)$ is the first eigenvalue of the Laplacian on $\Om$ (under Dirichlet conditions),
and
\begin{equation}
\label{eq:CheegerConstant}
h(\Om)=\inf_{X\subseteq\overline\Om}\frac{P(X)}{V(X)},
\end{equation}
where the infimum in~\eqref{eq:CheegerConstant} is taken over all non-empty sets $X$ contained in $\overline\Om$,
and $P(X)$ and $V(X)$ denote the perimeter 
and the volume of $X$, respectively.
This constant $h(\Om)$ is usually called the \emph{Cheeger constant} of $\Om$,
and any subset $X$ contained in $\overline\Om$ providing the infimum in~\eqref{eq:CheegerConstant} is called a \emph{Cheeger set} of $\Om$.
Determining the Cheeger constant and the Cheeger sets of $\Om$, as well as the properties of them, is the main core of the classical Cheeger problem.

This question, intimately related to the classical isoperimetric problem
(see, for instance, the introductory texts~\cite{O,rr}),
has applications in very numerous distinct settings.
As a sample, we briefly enumerate some of them.
It is well known that the Cheeger constant is the limit
of the sequence of first eigenvalues of the $p$\,-Laplacian (with Dirichlet conditions)
of a bounded domain when $p$ tends to 1~\cite{KF}.
This result was later extended to the sequence of second eigenvalues in~\cite{parini-second}
by using the notion of higher Cheeger constants, see~\cite{BP2}.
In the field of image regularization, an approach for removing noise in a given image \cite{ROF}
is strongly connected to the two-dimensional Cheeger problem, as described in~\cite[\S.~2.3]{leonardi}, see also \cite{ACC-interfaces}.
Additionally, the Cheeger constant explicitly appears in the condition on the pressure needed
for breaking down a planar plate~\cite{keller},
and also in some maximal flow-minimal cuts planar problems~\cite{strang}
(with further applications in the field of medical imaging).
Moreover, the Cheeger constant is involved
in the problem of finding hypersurfaces in $\rr^n$ with prescribed (constant) mean curvature,
and provides some equilibrium capillary free-hypersurfaces in certain cases.
The interested reader may find more details of these applications (and some others) in~\cite[\S.~7]{parini} or~\cite[\S.~2]{leonardi}.
We can even find analogues to the Cheeger problem in graph theory~\cite{tan}.

For a given bounded domain $\Om$ in $\rr^n$,
finding the Cheeger constant and the Cheeger sets is, in general, a hard task.
However, questions such as the existence and uniqueness of the Cheeger sets,
as well as intrinsic properties of these sets,
have been well studied in different works.
Our Lemma~\ref{le:properties} collects some of the main results in this direction,
but we remark here that the existence of Cheeger sets is assured in our setting
due to the boundedness of $\Om$, and therefore the infimum in~\eqref{eq:CheegerConstant} is always attained under this hypothesis.
Moreover, if $\Om$ is convex, we have uniqueness of the associated Cheeger set,
which will be also convex with smooth boundary.

In the two-dimensional case, the Cheeger problem is more treatable,
although the complete determination of the Cheeger sets is only known for some specific situations.
Among all the papers in this context, we remark a particular one by B.~Kawohl and T.~Lachand-Robert~\cite{KL},
where we can find some interesting characterizations of the Cheeger sets for planar convex sets.
One of these results establishes a condition for a planar convex set $\Om$
for assuring that the Cheeger set of $\Om$ is $\Om$ itself
(in other words, a condition on $\Om$ to be \emph{calibrable}, see~\cite{ACC-annalen}),
in terms of an inequality involving the curvature of the boundary of $\Om$~\cite[Th.~2]{KL}. 
This result can be applied to the circle, as well as the stadium domain and certain ellipses.
On the other hand,~\cite[Th.~1]{KL} gives a useful description of the Cheeger set of any planar convex set
(specifying also the value of the Cheeger constant) as a Minkowski sum in terms of the inner parallel set
(see also~\cite{B},~\cite[Th.~3.32]{SZ}).
We will use this result, which is stated in Theorem~\ref{th:KL}, in our Section~\ref{sec:main}.

The Cheeger problem in the class of convex polygons is another question of particular interest,
treated for instance in~\cite[\S.~4 and~5]{KL} and in~\cite{BF}.
For a given convex polygon $P$,
standard properties yield that its associated Cheeger set $C_P$ will be bounded by certain line segments and certain circular arcs, see Lemma~\ref{le:properties}.
Taking into account this, it may happen that the boundary $\ptl C_P$ of $C_P$ touches \emph{all} the sides of the polygon $P$.
In that case, $P$ is called \emph{Cheeger-regular} polygon.
Some examples of this situation are triangles and rectangles,
and a necessary and sufficient (analytic and geometrical) condition is established in~\cite[Th.~3]{KL}.
Otherwise, if $\ptl C_P$ does not touch all the sides of $P$, then $P$ is a Cheeger-irregular polygon
(as commented in \cite{KL}, a quadrilateral with a very small side is Cheeger-irregular).
Some remarkable consequences are the following: for a Cheeger-regular convex polygon $P$,
its associated Cheeger set can be obtained by \emph{rounding all the corners} of $P$,
and for any general convex polygon, a direct algorithm
for determining its Cheeger set is described in \cite[\S.~5]{KL}.

In this note, we study the notion of Cheeger-regular set
for the class of \emph{$k$-rotationally symmetric planar convex bodies}.
This class of sets has been considered in the last years 
for some partitioning problems involving the maximum relative diameter functional
(see~\cite{mejora} and references therein).
We will see that we can properly define the concepts of dots and edges
for any $k$-rotationally symmetric planar convex body $\Om$,
somehow in analogy with the vertices and the sides of the polygons.
We will prove in Theorem~\ref{th:main} that any set of this class is Cheeger-regular
(that is, the corresponding Cheeger set $C_\Om$ touches all the edges of $\Om$),
and additionally, we will see in Corollary~\ref{co:rot} that
the $k$-rotational symmetry of $\Om$ is inherited by  $C_\Om$.

{\bf Notation.} Throughout this paper, we will denote by $d$ the Euclidean distance in the plane.
For a given planar bounded set $X$, $P(X)$ will stand for the perimeter of the boundary of $X$
(that is, the one-dimensional Hausdorff measure of $\ptl X$),
and $A(X)$ will stand for the Euclidean area of $X$.
Moreover, the addition of two planar sets must be understood as the classical Minkowski addition.
We also recall that a planar body is, as usual, a planar compact set.

\section{Preliminaries}

\subsection{Some generalities on the Cheeger problem}
As stated in the Introduction, the Cheeger problem has been deeply studied in the literature.
We collect in Lemma~\ref{le:properties} some well-known results, which can be found in many different texts
(see for instance~\cite{AC,leonardi,parini} and references therein).

\begin{lemma}
\label{le:properties}
Let $\Om$ be a planar convex body. Then, there exists a Cheeger set $C_\Om$ of $\Om$. Moreover,
\begin{itemize}
\item[i)] $C_\Om$ is unique, convex with smooth boundary.
\item[ii)] $C_\Om$ touches the boundary of $\Om$.
\item[iii)] $C_\Om$ is an isoperimetric region in $\Om$ for the area it encloses.
\item[iv)] The pieces of the boundary of $C_\Om$ in the interior of $\Om$ are circular arcs of curvature $1/h(\Om)$,
where $h(\Om)$ is the Cheeger constant of $\Om$.
\end{itemize}
\end{lemma}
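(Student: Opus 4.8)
The plan is to establish each of the four items by combining the standard existence and regularity theory for the Cheeger problem with the convexity hypothesis. For item (i), the existence of a minimizer follows from the direct method in the calculus of variations: since $\Om$ is bounded, the competitors $X\subseteq\overline\Om$ have uniformly bounded perimeter once one restricts to sets realizing values close to $h(\Om)$, so by compactness of sets of bounded variation in $L^1$ one extracts a convergent subsequence whose limit is a minimizer, using lower semicontinuity of perimeter and continuity of volume. For uniqueness and convexity, I would appeal to the fact that $\Om$ convex forces the Cheeger set to be convex: if $C_\Om$ were not convex, its convex hull would have strictly smaller perimeter and at least as large area while still lying in $\overline\Om$ (here convexity of $\Om$ is essential so the hull stays admissible), contradicting minimality; uniqueness then follows from strict concavity properties of the associated functional, or can simply be cited from the literature referenced in the excerpt.

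For item (iv), which I expect to be the geometric heart of the argument, the idea is to analyze the portion of $\ptl C_\Om$ lying in the interior of $\Om$ by a first-variation (Euler--Lagrange) computation. Since $C_\Om$ minimizes the ratio $P(X)/V(X)$, it minimizes $P(X)-h(\Om)\,V(X)$ at the optimal value $h=h(\Om)$; perturbing $C_\Om$ by a normal variation supported in the interior of $\Om$ shows that the free boundary must have constant mean curvature equal to $h(\Om)$. In the plane, constant curvature means the free boundary is composed of circular arcs, and the sign of the variation pins the radius at exactly $1/h(\Om)$. This is precisely the link between the Cheeger and isoperimetric problems, so I would phrase item (iii) as the statement that $C_\Om$ solves the constrained isoperimetric problem in $\Om$ for its own enclosed area, and then derive (iv) as the regularity conclusion for that variational problem.

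Item (ii) follows almost immediately once (iii) and (iv) are in hand: if $C_\Om$ did not touch $\ptl\Om$, then it would be a free isoperimetric region in the open set $\inte\Om$, hence a round disk of radius $1/h(\Om)$; but a disk of radius $r$ has Cheeger ratio $P/V = (2\pi r)/(\pi r^2) = 2/r = 2h(\Om)$, and one checks that strictly shrinking $\Om$ towards a disk that touches the boundary lowers the ratio, so a disk strictly interior to $\Om$ cannot be optimal. More cleanly, a simple scaling argument shows that if $C_\Om\subseteq\inte\Om$ one could dilate $C_\Om$ slightly about its center, strictly decreasing $P/V$ while remaining in $\overline\Om$, contradicting minimality. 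The main obstacle in the whole argument is item (iv): rigorously justifying the first-variation characterization of the free boundary requires the a priori regularity of $\ptl C_\Om$ (so that the normal variation is well-defined) and a careful treatment of the contact set where $\ptl C_\Om$ meets $\ptl\Om$. Since all four items are classical and explicitly attributed to the references in the excerpt, I would present the proof as a guided synthesis of these known results rather than reproving the full regularity theory from scratch.
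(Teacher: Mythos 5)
Your proposal is sound, but you should know that the paper does not prove this lemma at all: it is presented explicitly as a collection of well-known results, and the ``proof'' consists of the citations to Alter--Caselles, Leonardi and Parini. Your reconstruction --- direct method in BV for existence, convex hull for convexity, first variation for the constant-curvature free boundary, dilation for item (ii) --- is the standard route taken in those references, and your scaling argument for (ii) is clean and self-contained. Three caveats on the details. First, the convex-hull step as you state it fails for disconnected competitors (the hull of two far-apart disks has larger perimeter than their union), so you must first reduce to a connected minimizer, e.g.\ by noting that the best indecomposable component of any minimizer achieves a ratio at least as good. Second, uniqueness is the genuinely deep item: it does not follow from any soft ``strict concavity'' of the functional, and the actual proofs (Alter--Caselles in $\rr^n$, Kawohl--Lachand-Robert in the plane) require substantial work, so your fallback of citing the literature is the only realistic move --- which is exactly what the paper does. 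Third, item (iii) needs none of the machinery you invoke: if some $Y\subseteq\overline\Om$ had the same area as $C_\Om$ and strictly smaller perimeter, then $P(Y)/A(Y)<h(\Om)$, contradicting the definition of the Cheeger constant, so (iii) is a one-line consequence of minimality rather than a regularity statement. Finally, a small point in your favor: the lemma as printed says the interior arcs have ``curvature $1/h(\Om)$'', but your version (curvature $h(\Om)$, equivalently radius $1/h(\Om)$) is the correct one, and it is what the paper itself asserts in the discussion following Theorem~\ref{th:KL}.
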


We can find an interesting characterization of the Cheeger set for any planar convex body $\Om$ in \cite{KL}.
For any $t>0$, denote by $\Om^t$ the inner parallel set of $\Om$ at distance $t$, that is,
$$\Om^t=\{x\in\Om: dist(x,\ptl\Om)>t\}.$$
Then we have the following result.

\begin{theorem}(\cite[Th.~1]{KL})
\label{th:KL}
Let $\Om$ be a planar convex body. Then, there exists a unique value $s>0$ such that $A(\Om^{s})=\pi s^2$.
We also have that $h(\Om)=1/s$ and the Cheeger set of $\Om$ is $C_\Om=\Om^{s}+s\,B_1$, where $B_1$ is the Euclidean unit disk.
\end{theorem}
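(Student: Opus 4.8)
The plan is to establish the three assertions in order: the existence and uniqueness of $s$, the computation of the ratio $P/A$ for the candidate set $\Om^s+s\,B_1$, and the optimality of this ratio; the identification $C_\Om=\Om^s+s\,B_1$ will then follow from the uniqueness in Lemma~\ref{le:properties}.

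First, for the existence and uniqueness of $s$, I would study the function $f(t)=A(\Om^t)-\pi t^2$ on $[0,\rho]$, where $\rho$ is the inradius of $\Om$ (the largest $t$ for which $\Om^t\neq\emptyset$). Since $\Om$ is a convex body, $t\mapsto A(\Om^t)$ is continuous and strictly decreasing on $[0,\rho]$, with $A(\Om^0)=A(\Om)>0$ and $A(\Om^\rho)=0$. Hence $f$ is continuous and strictly decreasing with $f(0)=A(\Om)>0$ and $f(\rho)=-\pi\rho^2<0$, so by the intermediate value theorem there is a unique $s\in(0,\rho)$ with $A(\Om^s)=\pi s^2$.

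Next, to bound $h(\Om)$ from above, I would set $C=\Om^s+s\,B_1$ and apply the planar Steiner formulas for the Minkowski sum of a convex body with a disk, namely $P(\Om^s+s\,B_1)=P(\Om^s)+2\pi s$ and $A(\Om^s+s\,B_1)=A(\Om^s)+s\,P(\Om^s)+\pi s^2$. Substituting $A(\Om^s)=\pi s^2$ yields $A(C)=s\,(P(\Om^s)+2\pi s)=s\,P(C)$, so $P(C)/A(C)=1/s$. Since every point of $\Om^s$ is the centre of a disk of radius $s$ contained in $\Om$, we have $C\subseteq\Om$, and therefore $h(\Om)\le P(C)/A(C)=1/s$.

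The main obstacle is the reverse inequality $h(\Om)\ge 1/s$, i.e.\ $A(X)\le s\,P(X)$ for every competitor $X\subseteq\overline\Om$. I would first reduce to convex $X$: replacing $X$ by its convex hull keeps it inside $\Om$, does not increase the perimeter and does not decrease the area, so it does not increase the ratio $P/A$. For convex $X$ I would exploit two monotonicity properties of the inner parallel sets. On one hand, $X\subseteq\Om$ gives $X^t\subseteq\Om^t$ for every $t$, whence $A(X^s)\le A(\Om^s)=\pi s^2$. On the other hand, since $X^t+t\,B_1\subseteq X$ and both sets are convex, the Steiner formula together with the monotonicity of perimeter under inclusion of convex bodies gives $P(X^t)+2\pi t=P(X^t+t\,B_1)\le P(X)$, that is $P(X^t)\le P(X)-2\pi t$. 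Combining these with the coarea identity $A(X)-A(X^s)=\int_0^s P(X^t)\,dt$ (which comes from $\tfrac{d}{dt}A(X^t)=-P(X^t)$), I would obtain
\[
A(X)=A(X^s)+\int_0^s P(X^t)\,dt\le \pi s^2+\int_0^s\bigl(P(X)-2\pi t\bigr)\,dt=s\,P(X),
\]
with a trivial adjustment in the case where the inradius of $X$ is smaller than $s$ (then $X^s=\emptyset$ and one integrates only up to the inradius). This yields $h(\Om)\ge 1/s$, hence $h(\Om)=1/s$; since $C$ attains this ratio it is a Cheeger set, and the uniqueness in Lemma~\ref{le:properties} forces $C_\Om=\Om^s+s\,B_1$.
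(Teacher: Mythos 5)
The paper itself contains no proof of Theorem~\ref{th:KL}: the statement is imported verbatim from~\cite[Th.~1]{KL}, so your argument can only be measured against the original proof of Kawohl and Lachand-Robert and on its own merits. Their route goes through the structure theory of the Cheeger set: uniqueness and convexity of $C_\Om$, the fact that the free boundary consists of circular arcs of curvature $h(\Om)$ meeting $\ptl\Om$ tangentially, from which the rolling-ball representation $C_\Om=\Om^{s}+s\,B_1$ and then the area identity $A(\Om^{s})=\pi s^2$ are extracted. Your proof runs in the opposite direction: a direct, Bonnesen-type verification that the candidate $\Om^{s}+s\,B_1$ achieves the ratio $1/s$ and that no competitor beats it, with the final identification delegated to the uniqueness statement in Lemma~\ref{le:properties}. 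This is a legitimate and in some ways more elementary approach; the ingredients you invoke (the Steiner formulas for outer parallel bodies of a convex set, the coarea identity $A(X)-A(X^{s})=\int_0^{s}P(X^t)\,dt$, the monotonicity $X\subseteq\Om\Rightarrow X^t\subseteq\Om^t$ up to a measure-zero boundary set, and $P(X^t)\le P(X)-2\pi t$ via $X^t+t\,B_1\subseteq X$) are all standard for convex bodies, and your case distinction on whether the inradius of $X$ exceeds $s$ is handled correctly.

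The one step that is false as written is the reduction to convex competitors. The claim that replacing $X$ by its convex hull ``does not increase the perimeter'' fails for disconnected $X$: two disjoint disks of radius $r$ with centers at distance $D$ have total perimeter $4\pi r$, while their convex hull has perimeter $2\pi r+2D$, which is strictly larger as soon as $D>\pi r$. The repair is standard and short: a competitor of finite perimeter decomposes into indecomposable (essentially connected) components $X_i$, and the mediant inequality $\bigl(\sum_i P(X_i)\bigr)/\bigl(\sum_i A(X_i)\bigr)\ge\min_i P(X_i)/A(X_i)$ shows that some single component does at least as well as $X$; for an indecomposable planar set the convex hull does decrease perimeter and increase area, and it stays inside $\Om$ by convexity. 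With that one insertion before your convex-hull step, the proof is complete.
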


Theorem~\ref{th:KL} gives a nice geometrical characterization of the Cheeger constant
in this convex setting,
by means of the inner parallel set $\Om^{s}$ of $\Om$ which encloses the same area as the planar disk of radius $s$,
and shows that $C_\Om$ is precisely the Minkowski addition of that inner parallel set and that disk.
From this result we also deduce that the part of $\ptl C_\Om$ contained in the interior of $\Om$
is a union of circular arcs of radius $s$,
and hence $h(\Om)$ (which coincides with $1/s$)
can be identified as the \emph{curvature} of $\ptl C_\Om$ in the interior of $\Om$ (see~\cite[Th.~3.32]{SZ}).

\subsection{The class of $k$-rotationally symmetric planar convex bodies}
Given $k\in\nn$, $k\geq 2$, a planar convex body $\Om$ 
is said to be \emph{$k$-rotationally symmetric}
if there exists a point $p\in\Om$ such that $\Om$ is invariant
under the rotation $\theta_k$ of angle $2\pi/k$ centered at $p$
(the point $p$ is usually called the \emph{center of symmetry} of $\Om$).
Note that, in that case, $\theta_k(\ptl\Om)=\ptl\Om$. 
Some examples of this type of sets are the regular polygons or Reuleaux polygons, see Figure~\ref{fig:examples}.
\begin{figure}[ht]
  \includegraphics[width=0.8\textwidth]{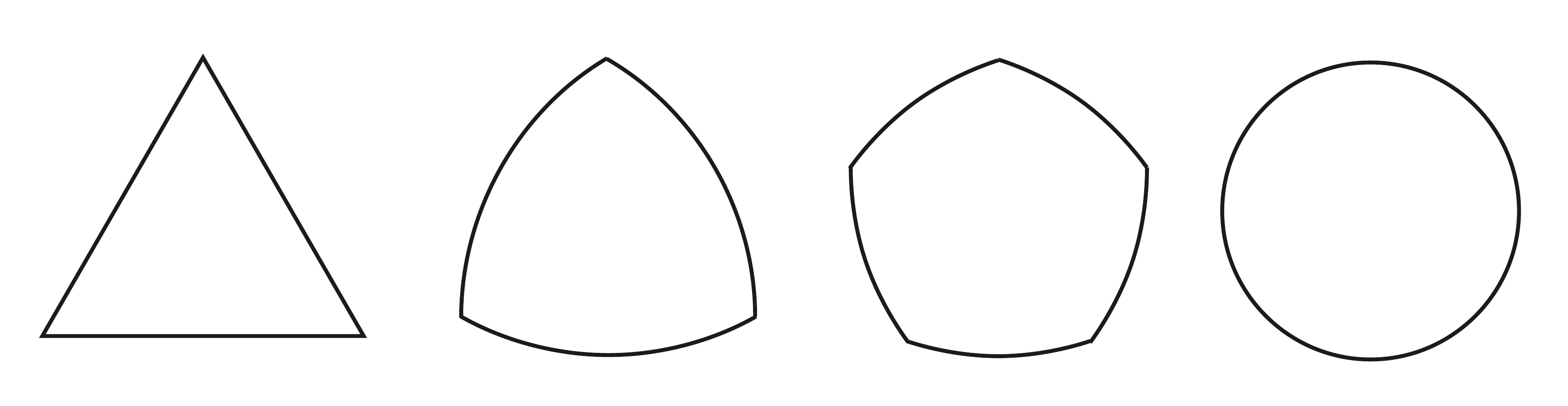}\\
  \caption{Some examples of $k$-rotationally symmetric planar convex bodies: an equilateral triangle and a Reuleaux triangle ($k=3$), a Reuleaux pentagon ($k=5$), and a circle}\label{fig:examples}
\end{figure}

\noindent Figure~\ref{fig:optimals} shows
two interesting $k$-rotationally symmetric planar convex bodies, for $k=3$ and $k=5$.
They are obtained by interesecting the unit disk with a certain equilateral triangle,
and with a certain regular pentagon.
These sets appear as the optimal bodies for an optimization division problem
involving the maximum relative diameter functional~\cite[Th.~5.1]{extending}.
\begin{figure}[ht]
  \includegraphics[width=0.53\textwidth]{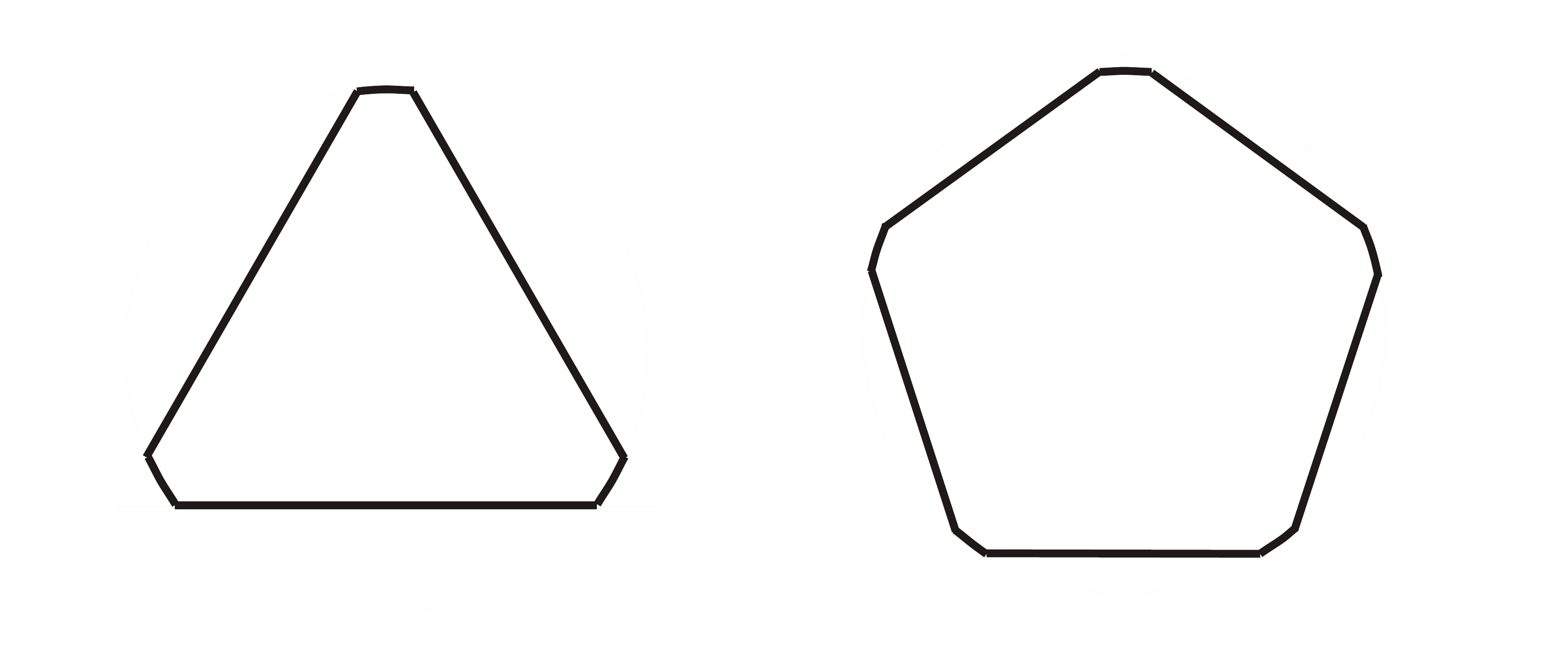}\\
  \caption{Two $k$-rotationally symmetric planar convex bodies, for $k=3$ and $k=5$}\label{fig:optimals}
\end{figure}

\noindent Additionally, the set from Figure~\ref{fig:contraejemplos} is another remarkable example
of this type of sets for $k=2$.
It provides an example where the standard bisection is not minimizing
for the maximum relative diameter functional, see details in~\cite[Ex.~3]{mejora}.
\begin{figure}[ht]
  \includegraphics[width=0.53\textwidth]{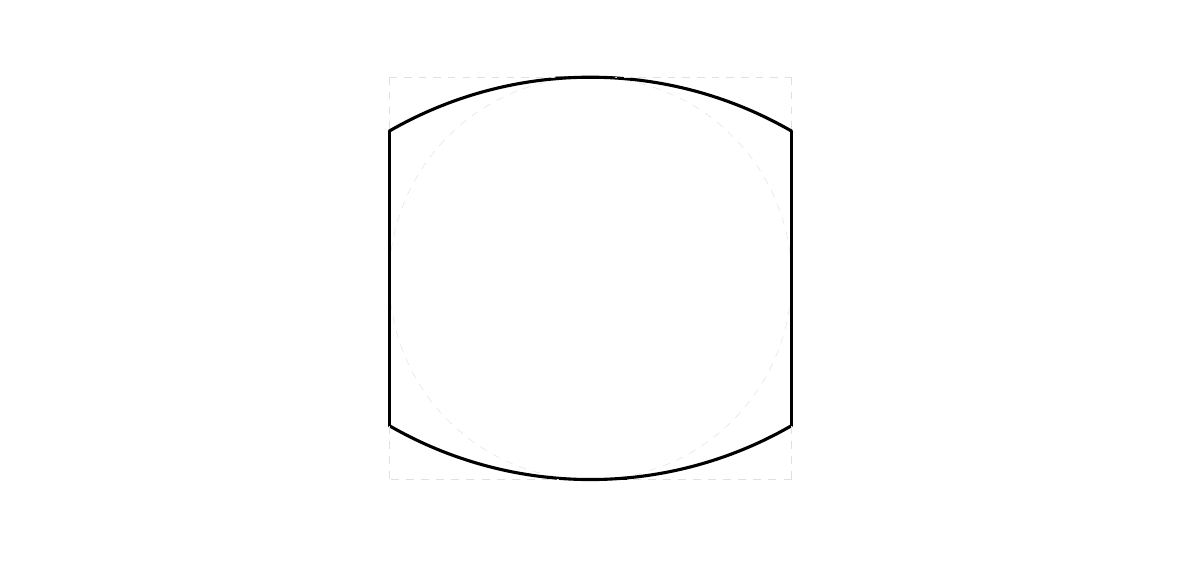}\\
  \caption{A 2-rotationally symmetric planar convex body}\label{fig:contraejemplos}
\end{figure}

We will now define the notions of dots and edges of a $k$-rotationally symmetric planar convex body $\Om$
(keeping certain analogy with the vertices and the sides of polygons).
Let $R>0$ be the circumradius of $\Om$, and let $p$ be the center of symmetry of $\Om$.
Then, there exist $x_1,\ldots, x_k \in \ptl\Om$ such that $d(x_i,p)=R$,
which can be further chosen to be $k$-rotationally symmetric
(observe that the circumradius is necessarily attained by a point $x_1\in\ptl\Om$,
and by applying repeatedly the rotation $\theta_k$ we will obtain the rest of the points).
We will call \emph{dots} of $\Om$ to any choice of these points
(notice that the set of dots may not be unique).
Moreover, for a given set of dots $x_1,\ldots, x_k$ of $\Om$,
an \emph{edge} of $\Om$ will be any piece of $\ptl\Om$ delimited by two consecutive dots.
This implies that $\Om$ will have $k$ edges, namely $L_i=[x_i,x_{i+1}]$, $i=1,\ldots,k$,
with the convention that $x_{k+1}=x_1$, see Figure~\ref{fig:vertices}.
We will consider the edges $L_1,\ldots, L_k$ of $\Om$ in the counterclock-wise order along $\ptl\Om$.
\begin{figure}[ht]
  \includegraphics[width=0.74\textwidth]{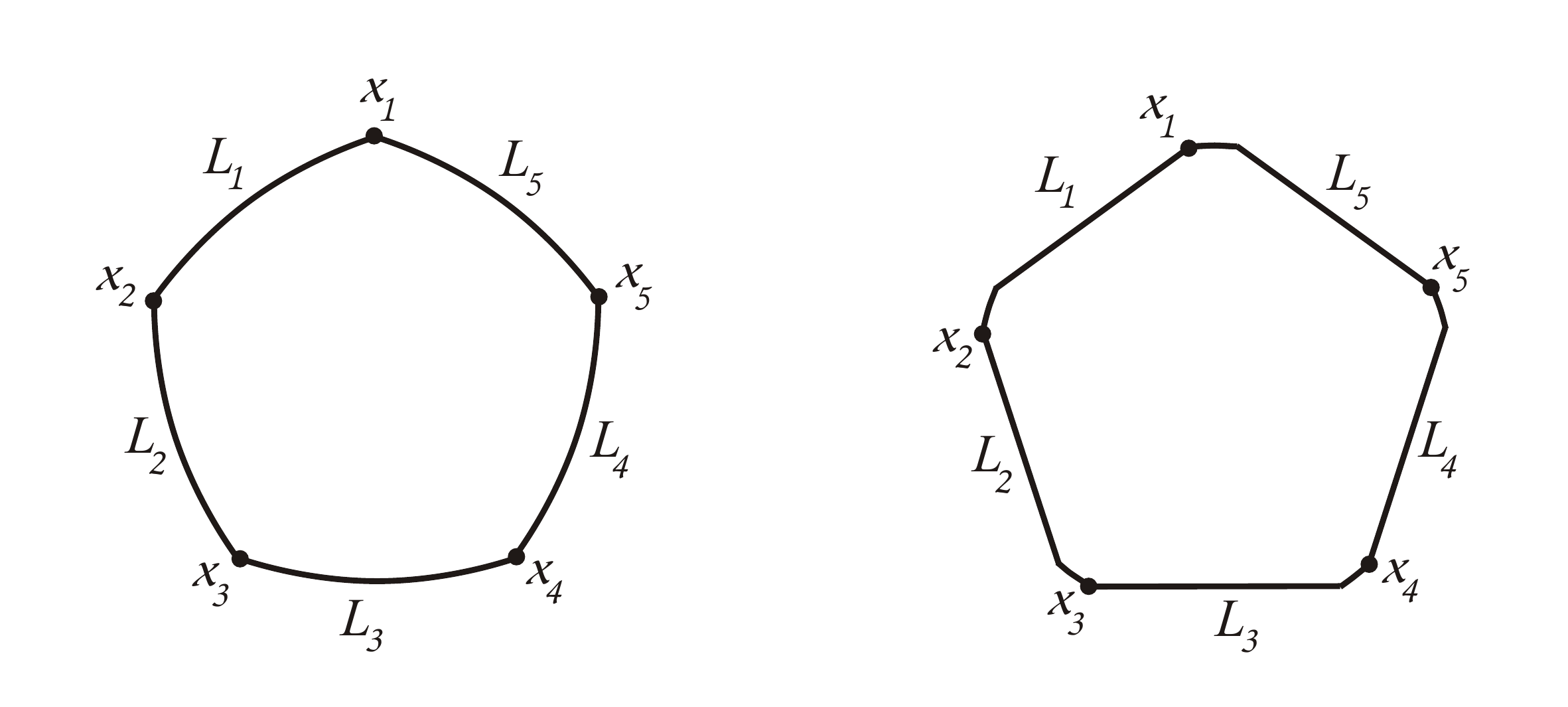}\\
  \caption{Dots and edges of two different $5$-rotationally symmetric planar convex bodies}\label{fig:vertices}
\end{figure}

\begin{remark}
For a given $k$-rotationally symmetric polygon $P$,
the notions of \emph{sides} and \emph{edges} of $P$ do not necessarily coincide.
For instance, a regular polygon $P_n$ of $n$ sides is clearly $n$-rotationally symmetric,
and the sides and edges of $P_n$ will be the same.
However, if we consider an equilateral triangle and we (slightly) cut its vertices symmetrically (see Figure~\ref{fig:cuttri}),
we will obtain a non-regular hexagon 
which is $3$-rotationally symmetric, with six sides and three edges.
The same happens for the \emph{vertices} and the \emph{dots} of a $k$-rotationally symmetric polygon with $n$ sides (for $k\neq n$).
\end{remark}

\begin{figure}[ht]
  \includegraphics[width=0.26\textwidth]{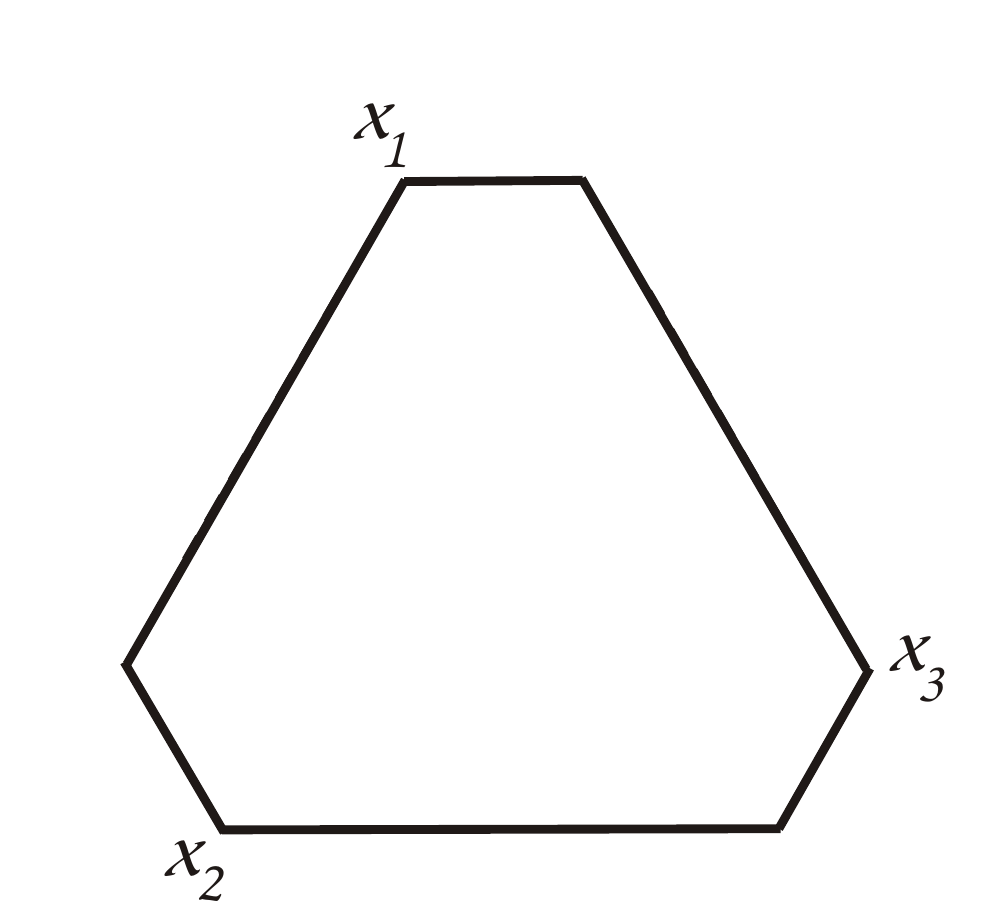}\\
  \caption{A $3$-rotationally symmetric hexagon}\label{fig:cuttri}
\end{figure}

We can now define the notion of Cheeger-regular set in the class of $k$-rotationally symmetric planar convex bodies.

\begin{definition}
\label{def:Cheeger-regular}
Let $\Om$ be a $k$-rotationally symmetric planar convex body, with dots $x_1,\ldots,x_k$ and edges $L_1,\ldots,L_k$.
Let $C_\Om$ be the Cheeger set of $\Om$.
Then $\Om$ is called Cheeger-regular if $C_\Om$ touches all the edges of $\Om$.
\end{definition}

\begin{remark}
Definition~\ref{def:Cheeger-regular} does not depend on the choice of the dots of $\Om$.
\end{remark}

\section{Main results}
\label{sec:main}
In this section we prove our main results, by using Theorem~\ref{th:KL}.
Theorem~\ref{th:main} shows that any $k$-rotationally symmetric planar convex body $\Om$ is Cheeger-regular,
and Corollary~\ref{co:rot} assures that the Cheeger set of $\Om$ is always $k$-rotationally symmetric
(that is, in this setting, the rotational symmetry is inherited by the Cheeger set).

\begin{theorem}
\label{th:main}
Let $\Om$ be a $k$-rotationally symmetric planar convex body.
Then, $\Om$ is Cheeger-regular, that is, its Cheeger set $C_\Om$ touches all the edges of $\Om$.
\end{theorem}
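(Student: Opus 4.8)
The plan is to read the rotational symmetry of $C_\Om$ directly off the description $C_\Om=\Om^{s}+s\,B_1$ in Theorem~\ref{th:KL}, and then to reduce the statement to the existence of a single contact point between $C_\Om$ and $\ptl\Om$ lying on one edge, propagating it to all edges by the rotation $\theta_k$.

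First I would show that $C_\Om$ is $k$-rotationally symmetric about $p$. The rotation $\theta_k$ is an isometry fixing $p$ with $\theta_k(\Om)=\Om$, hence $\theta_k(\ptl\Om)=\ptl\Om$, and $\theta_k$ preserves the distance to $\ptl\Om$; consequently $\theta_k(\Om^{s})=\Om^{s}$ for the radius $s$ of Theorem~\ref{th:KL}. Since $B_1$ is invariant under any rotation about its centre, $\theta_k(C_\Om)=\theta_k(\Om^{s})+s\,B_1=\Om^{s}+s\,B_1=C_\Om$. (This is the content of Corollary~\ref{co:rot}; it also follows at once from the uniqueness in Lemma~\ref{le:properties}, as $\theta_k(C_\Om)$ is again a Cheeger set of $\Om$.) Because the dots are chosen $k$-rotationally symmetric we have $\theta_k(x_i)=x_{i+1}$, and therefore $\theta_k(L_i)=L_{i+1}$ for every $i$, indices mod $k$.

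Next I would invoke Lemma~\ref{le:properties}(ii): $C_\Om$ touches $\ptl\Om$, so there is a point $q\in C_\Om\cap\ptl\Om$. As $\ptl\Om=\bigcup_{i=1}^{k}L_i$, the point $q$ lies on some edge $L_j$, so $C_\Om$ touches $L_j$. For each $m$ the image $\theta_k^{\,m}(q)$ lies in $\theta_k^{\,m}(C_\Om)\cap\theta_k^{\,m}(L_j)=C_\Om\cap L_{j+m}$, so $C_\Om$ touches $L_{j+m}$; letting $m=0,\dots,k-1$ shows that $C_\Om$ touches all $k$ edges, which is the assertion.

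The delicate point — and what I expect to be the main obstacle — is to guarantee that the contact occurs on an edge in the intended interior sense, matching the polygon notion, and not merely at a dot where two edges abut. Here I would exploit Lemma~\ref{le:properties}(i) and (iv): $\ptl C_\Om$ is a smooth convex curve whose portion interior to $\Om$ is a union of circular arcs of radius $s$. If the contact set $C_\Om\cap\ptl\Om$ were finite, then at each contact point the two adjoining interior arcs would meet tangentially by $C^{1}$-smoothness, share the same inward centre, and hence lie on one and the same circle of radius $s$; connectedness would force $\ptl C_\Om$ to be a single such circle, i.e. $C_\Om=B(p,s)$, centred at $p$ by the symmetry above. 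In that exceptional case $C_\Om$ is the incircle of $\Om$ and meets $\ptl\Om$ exactly at the points nearest to $p$; these cannot be the dots, which sit at the maximal distance $R$ from $p$ (the case where $\Om$ is itself a disk is trivial, as then $C_\Om=\Om$), so the contact already lies in the relative interior of an edge. Otherwise $C_\Om\cap\ptl\Om$ contains an arc of positive length; as $\Om$ has only the $k$ dots, such an arc necessarily contains points in the relative interior of some edge. In either case we obtain a contact point in the relative interior of an edge, and the propagation by $\theta_k$ of the previous paragraph upgrades it to interior contact on every edge.
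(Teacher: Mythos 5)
Your proposal is correct and its core is essentially the paper's own argument: the paper likewise takes one contact point $q\in\ptl C_\Om\cap\ptl\Om$, writes $q=z+s\,v$ with $z\in\Om^{s}$ via Theorem~\ref{th:KL}, checks that $\theta_k$ preserves the distance to $\ptl\Om$ (hence $\Om^{s}$), and propagates the contact edge by edge; you merely reorder matters by establishing $\theta_k(C_\Om)=C_\Om$ first (the paper proves the theorem first and deduces Corollary~\ref{co:rot} afterwards by the same computation), and your parenthetical derivation of this symmetry from uniqueness of the Cheeger set is a nice shortcut that avoids Theorem~\ref{th:KL} altogether. One caveat about your final paragraph: it addresses a refinement the statement does not require, since the edges $L_i=[x_i,x_{i+1}]$ are closed arcs, so a contact point at a dot already lies on (two) edges and the paper's definition of Cheeger-regular is satisfied by the propagation argument alone. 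Moreover, its dichotomy (finite contact set versus contact set containing an arc of positive length) is not exhaustive — a closed infinite subset of $\ptl\Om$ need not contain an arc; fortunately this is harmless, because an infinite contact set must contain points other than the $k$ dots, so the interior-contact conclusion you were after still follows.
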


\begin{proof}
Fix $q\in\ptl C_\Om\cap\ptl\Om$ (recall that $\ptl C_\Om$ touches necessarily $\ptl\Om$).
We can assume that $q$ lies in the edge $L_1$ of $\Om$.
Taking into account the characterization of $C_\Om$ from Theorem~\ref{th:KL},
it follows there exists $s>0$ such that $q=z+s\,v$, for certain $z\in\Om^{s}$ and $v\in\sph^1$,
and by applying the rotation $\theta_k$, we will obtain that $\theta_k(q)=\theta_k(z)+s\,\theta_k(v)$.
Observe that $\theta_k(q)\in L_2$ and $\theta_k(v)\in\sph^1$. Let us prove that $\theta_k(z)\in\Om^{s}$.

Since $z\in\Om^{s}$, we have that $dist(z,\ptl\Om)=\min\{d(z,w):w\in\ptl\Om\}>s$.
Clearly, $d(z,w)=d(\theta_k(z),\theta_k(w))$ for any $w\in\ptl\Om$.
Since $\theta_k$ is bijective when restricted to $\ptl\Om$, 
it follows that
$$s<\min\{d(z,w):w\in\ptl\Om\}=\min\{d(\theta_k(z),w):w\in\ptl\Om\},$$
which implies that $\theta_k(z)\in\Om^{s}$.
Therefore $\theta_k(q)=\theta_k(z)+s\,\theta_k(v)\in\Om^{s}+s\,B_1=C_\Om$, using again Theorem~\ref{th:KL}.
This gives that $C_\Om\cap L_2\neq\emptyset$, and repeating the argument,
it yields that $C_\Om\cap L_i\neq\emptyset$, for $i=1,\ldots,k$,
which means that $C_\Om$ touches all the edges of $\Om$, as stated.
\end{proof}

In view of Theorem~\ref{th:main}, we have that the Cheeger set
of a given $k$-rotationally symmetric planar convex body $\Om$
can be obtained by rounding properly the dots of $\Om$.
We also have the following consequence.

\begin{corollary}
\label{co:rot}
Let $\Om$ be a $k$-rotationally symmetric planar convex body.
Then, its Cheeger set $C_\Om$ is $k$-rotationally symmetric.
\end{corollary}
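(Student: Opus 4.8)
The plan is to exploit the explicit characterization of the Cheeger set provided by Theorem~\ref{th:KL}, namely $C_\Om = \Om^{s} + s\,B_1$, and to show that each factor in this Minkowski sum is itself invariant under the rotation $\theta_k$. Since $\Om$ is $k$-rotationally symmetric, we have $\theta_k(\Om) = \Om$ and $\theta_k(\ptl\Om) = \ptl\Om$. The disk $B_1$ is centered at the origin and is rotationally invariant under any rotation about its center, so $s\,B_1$ is trivially preserved. The crux is therefore to verify that the inner parallel set $\Om^{s}$ is also preserved by $\theta_k$, after which the symmetry of $C_\Om$ follows because the Minkowski sum of two $\theta_k$-invariant sets is again $\theta_k$-invariant (rotation being a rigid motion fixing the center of symmetry).

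First I would establish that $\theta_k(\Om^{s}) = \Om^{s}$. This is essentially the computation already carried out inside the proof of Theorem~\ref{th:main}: for $z \in \Om$, one has $\mathrm{dist}(z, \ptl\Om) = \min\{d(z,w) : w \in \ptl\Om\}$, and since $\theta_k$ is an isometry with $\theta_k(\ptl\Om) = \ptl\Om$, it follows that $\mathrm{dist}(\theta_k(z), \ptl\Om) = \mathrm{dist}(z, \ptl\Om)$. Hence $z \in \Om^{s}$ if and only if $\theta_k(z) \in \Om^{s}$, which gives $\theta_k(\Om^{s}) = \Om^{s}$. Next I would check that the Minkowski sum is compatible with $\theta_k$: if $\theta_k$ is the rotation centered at the center of symmetry $p$, then for sets $X, Y$ one must be slightly careful, since the Minkowski sum $X + Y$ presumes a fixed origin. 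The cleanest route is to place the origin at $p$, so that $\theta_k$ becomes a linear rotation fixing the origin; then $\theta_k(X + Y) = \theta_k(X) + \theta_k(Y)$ holds by linearity of $\theta_k$ on the summands. Applying this with $X = \Om^{s}$ and $Y = s\,B_1$ yields $\theta_k(C_\Om) = \theta_k(\Om^{s}) + \theta_k(s\,B_1) = \Om^{s} + s\,B_1 = C_\Om$.

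The main obstacle, though a mild one, is the bookkeeping around the center of the rotation and the basepoint of the Minkowski sum: the identity $\theta_k(X+Y) = \theta_k(X) + \theta_k(Y)$ is only valid when $\theta_k$ is realized as a \emph{linear} map, i.e.\ when the center of symmetry $p$ is taken as the origin. With the origin placed at $p$, the disk $s\,B_1$ as it appears in Theorem~\ref{th:KL} should also be understood as centered at the origin, and then $\theta_k(s\,B_1) = s\,B_1$ is immediate. Once this normalization is fixed, the argument is purely formal. I would therefore conclude that $C_\Om$ is invariant under $\theta_k$, which is precisely the statement that $C_\Om$ is $k$-rotationally symmetric with the same center of symmetry $p$ as $\Om$.
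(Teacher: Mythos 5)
Your proposal is correct and follows essentially the same route as the paper: both rely on the characterization $C_\Om=\Om^{s}+s\,B_1$ from Theorem~\ref{th:KL}, the invariance of the inner parallel set $\Om^{s}$ under the isometry $\theta_k$ (the distance argument from the proof of Theorem~\ref{th:main}), and the rotational invariance of $B_1$; the paper merely phrases this pointwise (decomposing each $q\in C_\Om$ as $z+s\,v$) rather than at the level of sets. Your explicit normalization placing the origin at the center of symmetry $p$, so that $\theta_k$ is linear and distributes over the Minkowski sum, is a point the paper leaves implicit, and handling it is a small improvement rather than a divergence.
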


\begin{proof}
It suffices to prove that $\theta_k(C_\Om)\subset C_\Om$.
For any $q\in C_\Om$, we will have that $q=z+s\,v$,
for certain $z\in\Om^{t^*}$, $s>0$ and $w\in B_1$, in view of Theorem~\ref{th:KL}.
Then $\theta_k(q)=\theta_k(z)+s\,\theta_k(v)$.
It is clear that $\theta_k(v)\in B_1$,
and the same reasonings from the proof of Theorem~\ref{th:main} give that $\theta_k(z)\in\Om^{s}$.
Thus $\theta_k(q)\in\Om^{s}+s\,B_1=C_\Om$, which concludes the proof.
\end{proof}

\begin{remark}
The reader may compare our Corollary~\ref{co:rot} with~\cite[Lemma~2.3]{BP},
where it is proved that for a convex bounded domain $\Om$ in $\rr^n$, $n\geq3$,
which is also rotationally invariant
(in the sense that $\Om$ is invariant under the rotation about a fixed axis),
the corresponding Cheeger set inherits the same rotational invariance.
\end{remark}

\section{Further comments}
We finish with some general comments of interest on the Cheeger problem.

\begin{remark}
\label{re:potencias}
An interesting variant of the Cheeger problem, stated in~\cite[Problem~A23]{cfg},
consists of minimizing the quotient $P(X)^\alpha/A(X)$, for $\alpha>0$,
over all subsets $X$ contained in a given planar (convex) body $\Om$, looking also for the optimal sets.
We remark that a related question in general dimension (the \emph{fractional Cheeger problem})
is treated in~\cite{BLP}, and the reciprocal problem of minimizing $P(X)/V(X)^\alpha$
(over subsets $X$ of a given domain $\Om$ in $\rr^n$) is considered in~\cite{fmp},
where a sharp lower bound for the corresponding Cheeger constant is obtained.
\end{remark}

\begin{remark}
We also have, as a sort of extension of the problems described in Remark~\ref{re:potencias},
the so-called \emph{generalized Cheeger problem},
where two different positive functions on a planar bounded domain
are used for weighting the perimeter and area functionals.
This is partially studied in~\cite{IL},
with interesting applications in the framework of landslides modelling,
since the generalized Cheeger constant coincides with a certain coefficient
related to the stability of landslides (see~\cite{IL} and references therein).
In that work, among other things, the Cheeger sets and Cheeger constants for this generalized problem
are described when considering planar rectangles and a particular family of weights
which depend only on one variable~\cite[Th.~7 and Re.~5.E]{IL}.
These results may suggest that the study of this generalized Cheeger problem
in different domains and with different weights
(also known as \emph{densities}, which have been deeply studied in the last years, see~\cite{morgan})
is an interesting question which could reveal applications in other settings
(for instance, there is a strong connection with the existence of
rotationally invariant Cheeger sets in domains of $\rr^n$, see~\cite[Re.~5.1]{BP}).
We further point out that the generalized Cheeger problem in $\rr^n$ with the Gaussian density
weighting the perimeter and volume functionals has been considered in~\cite{CMN}.
\end{remark}

\begin{remark}
An additional related question is posing the Cheeger problem for the other classical geometric magnitudes,
that is, minimizing the quotient $F(X)/A(X)$ over all subsets $X$ contained in a given planar convex body $\Om$,
where $F(X)$ stands for the \emph{minimal width}, or the \emph{circumradius}, or the \emph{inradius} of $X$.
This was also suggested in~\cite[Problem~A.23]{cfg}, and we have not found any related reference in literature.
This is surprising for us, because the relations between pairs (and triplets)
of these classical magnitudes have been largely studied for a long time~\cite{SA}.
\end{remark}

\begin{remark}
Among all the polygons of (at most) $n$ sides enclosing a prescribed quantity of area,
it is known that the regular one provides the minimum possible value for the Cheeger constant~\cite{BF}.
This result, whose stability complement can be found in~\cite{CN},
is strongly related to a conjecture by P\'olya and Szeg\H o
on the polygon (of $n$ sides and prescribed enclosed area)
minimizing the first eigenvalue of the Laplacian~\cite{PS}.
\end{remark}

\begin{remark}
In the non-convex planar setting, there are only few situations where the Cheeger problem has been succesfully considered.
For a planar annulus, the Cheeger set is the annulus itself, and some advances have been obtained for
non-convex planar strips~\cite{KP,LP} (see also~\cite{KLV} for some similar recent results in general dimension).
Recall that when the set is not convex, uniqueness is not assured, as shown in~\cite{KL} describing a particular example.
\end{remark}

\begin{remark}
As previously commented, the Cheeger problem has been well studied
during the last decade in some families of sets with certain characteristics
(for instance, planar curved strips~\cite{KP,LP}, or rotationally symmetric domains~\cite{BP}).
This reveals the importance of determining appropriately the setting for studying this problem.
\end{remark}

{\bf Acknowledgments.} The author is partially supported by the MICINN project MTM2017-84851-C2-1-P, 
and by Junta de Andaluc\'ia grant FQM-325. The author would like to thank G.~P.~Leonardi for his help during the elaboration of this note.


\begin{thebibliography}{99}

\bibitem{AC}
F.~Alter, V.~Caselles, {\it Uniqueness of the Cheeger set of a convex body}, Nonlinear Analysis {\bf 70} (2009), no. 1, 32--44.

\bibitem{ACC-interfaces}
F.~Alter, V.~Caselles, A.~Chambolle, {\it Evolution of characteristic functions of convex sets in the plane by the minimizing total variation flow},
Interfaces Free Bound. {\bf 7} (2005), no. 1, 29--53.

\bibitem{ACC-annalen}
F.~Alter, V.~Caselles, A.~Chambolle, {\it A characterization of convex calibrable sets in $\rr^N$}, Math. Ann. {\bf 332} (2005), no. 2, 329--366.

\bibitem{B}
A.~S.~Besicovitch, {\it Variants of a classical isoperimetric problem}, Quart. J. Math. Oxford Ser. (2) 3 (1952), 42--49.

\bibitem{BP2}
V.~Bobkov, E.~Parini, {\it On the higher Cheeger problem}, J. Lond. Math. Soc. (2) {\bf 97} (2018), no. 3, 575--600.

\bibitem{BP}
V.~Bobkov, E.~Parini, {\it On the Cheeger problem for rotationally invariant domains}, to appear in Manuscripta Math. (2020).

\bibitem{BLP}
L.~Brasco, E.~Lindgren, E.~Parini, {\it The fractional Cheeger problem}, Interfaces Free Bound. {\bf 16} (2014), no. 3, 419--458.

\bibitem{BF}
D.~Bucur, I.~Fragal\`a, {\it A Faber-Krahn inequality for the Cheeger constant of $N$-gons}, J. Geom. Anal. {\bf 26} (2016), no. 1, 88--117.

\bibitem{extending}
A.~Ca{\~n}ete, U.~Schnell, S.~Segura Gomis, {\it Subdivisions of rotationally symmetric planar convex bodies minimizing the maximum relative diameter},
J. Math. Anal. Appl. {\bf 435} (2016), no. 1, 718--734.

\bibitem{mejora}
A.~Ca{\~n}ete, S.~Segura Gomis, {\it Bisections of centrally symmetric planar convex bodies minimizing the maximum relative diameter},
Mediterr. J. Math. {\bf 16} (2019), no. 6, Paper No. 151, 19 pp.

\bibitem{CN}
M.~Caroccia, R.~Neumayer, {\it A note on the stability of the Cheeger constant of $N$-gons}, J. Convex Anal. {\bf 22} (2015), no. 4, 1207--1213.

\bibitem{CMN}
V.~Caselles, M.~Miranda Jr., M.~Novaga, {\it Total variation and Cheeger sets in Gauss space}, J. Funct. Anal. {\bf 259} (2010), no. 6, 1491--1516.

\bibitem{cheeger}
J.~Cheeger, {\it A lower bound for the smallest eigenvalue of the Laplacian},
Problems in analysis (Papers dedicated to Salomon Bochner, 1969), 195--199. Princeton Univ. Press, Princeton, N. J., 1970.

\bibitem{cfg}
H.~T.~Croft, K.~J.~Falconer, R.~K.~Guy, \emph{Unsolved Problems in Geometry}, Springer-Verlag, Berlin, 1991.

\bibitem{fmp}
A.~Figalli, F.~Maggi, A.~Pratelli, {\it A note on Cheeger sets}, Proc. Amer. Math. Soc. {\bf 137} (2009), no. 6, 2057--2062.

\bibitem{IL}
I.~R.~Ionescu, T.~Lachand-Robert, {\it Generalized Cheeger sets related to landslides},
Calc. Var. Partial Differential Equations {\bf 23} (2005), no. 2, 227--249.

\bibitem{KF}
B.~Kawohl, V.~Fridman, {\it Isoperimetric estimates for the first eigenvalue of the p-Laplace operator and the Cheeger constant},
Comment. Math. Univ. Carolin. {\bf 44} (2003), 659--667.

\bibitem{KL}
B. Kawohl and T. Lachand-Robert, {\it Characterization of Cheeger sets for convex subsets of the plane},
Pacific J. Math {\bf 225} (2006), no 1, 103--118.

\bibitem{keller}
J.~B.~Keller, {\it Plate failure under pressure}, SIAM Review {\bf 22} (1980), no. 2, 227--228.

\bibitem{KLV}
D.~Krej\v ci\v r\'ik, G.~P.~Leonardi, P.~Vlachopulos, {\it The Cheeger constant of curved tubes}, Arch. Math. (Basel) {\bf 112} (2019), no. 4, 429--436.

\bibitem{KP}
D.~Krej\v ci\v r\'ik, A. Pratelli, {\it The Cheeger constant of curved strips}, Pacific J. Math. {\bf 254} (2011), no. 2, 309--333.

\bibitem{leonardi}
G.~P.~Leonardi, {\it An overview on the Cheeger problem}, New trends in shape optimization, Internat. Ser. Numer. Math., vol. 166, 117--139.  Birkh\"{a}user/Springer, Cham, 2015.

\bibitem{LP}
G.~P.~Leonardi, A.~Pratelli, {\it On the Cheeger sets in strips and non-convex domains},
Calc. Var. Partial Differential Equations {\bf 55} (2016), no. 1, Art. 15, 28 pp.


\bibitem{morgan}
F.~Morgan, {\it Manifolds with density}, Notices Amer. Math. Soc. {\bf 52} (2005), no. 8, 853--858.

\bibitem{O}
R.~Osserman, {\it The isoperimetric inequality}, Bull. Amer. Math. Soc. {\bf 84} (1978), no. 6, 1182--1238.

\bibitem{parini-second}
E.~Parini, {\it The second eigenvalue of the $p$-Laplacian as $p$ goes to 1}, Int. J. Differ. Equ. (2010), Art. ID 984671, 23 pp.

\bibitem{parini}
E.~Parini, {\it An introduction to the Cheeger problem}, Surv. Math. Appl. {\bf 6} (2011), 9--21.

\bibitem{PS}
G.~P\'olya, G.~Szeg\H o, {\it Isoperimetric Inequalities in Mathematical Physics},
Annals of Mathematics Studies, no. 27, Princeton Univ. Press, Princeton, N. J., 1951.

\bibitem{rr}
M.~Ritor\'e, A.~Ros, {\it Some updates on isoperimetric problems}, Math. Intelligencer {\bf 24} (2002), no. 3, 9--14.


\bibitem{ROF}
L.~I.~Rudin, S.~Osher, E.~Fatemi, {\it Nonlinear total variation based noise removal algorithms}, Phys. D {\bf 60} (1992), no. 1-4, 259--268.

\bibitem{SA}
P.~R.~Scott, P.~W.~Awyong, {\it Inequalities for convex sets}, JIPAM. J. Inequal. Pure Appl. Math. {\bf 1} (2000), no. 1, Article 6, 6 pp.

\bibitem{SS}
D.~Singmaster, D.~J.~Souppouris, {\it A constrained isoperimetric problem}, Math. Proc. Cambridge Philos. Soc. {\bf 83} (1978), no. 1, 73--82.

\bibitem{strang}
G.~Strang, {\it Maximal flow through a domain}, Math. Programming {\bf 26} (1983), 123--143.

\bibitem{SZ}
E. Stredulinsky, W. P. Ziemer, {\it Area minimizing sets subject
to a volume constraint in a convex set}, J. Geom. Anal. {\bf 7} (1997), no. 4, 653--677.

\bibitem{tan}
J.~Tan, {\it On Cheeger inequalities of a graph}, Discrete Math. {\bf 269} (2003), no. 1-3, 315--323.

\end{thebibliography}
\end{document}